\theoremstyle{change}
\newtheorem{lemma}{lemma}
\newtheorem{proposition}[lemma]{Proposition}
\newtheorem{corollary}[lemma]{Corollary}
\newtheorem{theorem}[lemma]{Theorem}}
\newtheorem{remark}[lemma]{Remark}
\newtheorem{remarks}[lemma]{Remarks}
\newenvironment{proof}{\noindent \bf Proof:  \rm}{\hspace*{\fill}
$\square$ \vskip8pt}
   \newenvironment{Facts}{\vskip8pt\noindent \bf Facts \rm}{\vskip8pt}
\newcommand{\T}{\mathbb{T}}
\newcommand{\mcd}{\mathcal{D}}
\newcommand{\msa}{\mathsf{A}}
\newcommand{\msc}{\mathsf{C}}
\newcommand{\msd}{\mathsf{D}}
\newcommand{\BC}{\mathcal{C}}
\newcommand{\BD}{\mathcal{D}}
\newcommand{\xra}{\xrightarrow}
\newcommand{\Mon}{\mathbf{Mon}}
\newcommand{\Ring}{\mathbf{Ring}}
\newcommand{\im}{\mathrm{im}}
\newcommand{\ot}{\otimes}
\newcommand{\Ab}{\mathbf{Ab}}
\begin{document}


  \title{Colimits  of monoids }

 \author{ 
 Hans-E. Porst\thanks{Permanent address: 
 Besselstr. 65, 28203 Bremen, Germany.} \\
\small Department of Mathematical Sciences, University of Stellenbosch, \\ \small Stellenbosch, South Africa. \\
 {\tt\small porst@uni-bremen.de} 
}

  
  \date{}
\maketitle

\begin{abstract}
If $\BC$ is a cocomplete monoidal category in which tensoring from both sides preserves coequalizers, then the category $\Mon\BC$ of monoids over $\BC$ is cocomplete. The same holds if $\BC$ has regular factorizations and tensoring only preserves regular epimorphisms. As an application a lifting theorem for an adjunction with a monoidal right adjoint  to an adjunction  between the respective  categories of monoids is proved.

\vskip 8pt
\noindent {\bf MSC 2000}: {Primary 18D10, Secondary 18A30}
\vskip 8pt
\noindent {\bf Keywords:} {Monoids in monoidal categories, coequalizers, (regularly) monadic functors}

\end{abstract}
\section{Introduction}
While limits in categories of monoids in $\Mon\BC$, the category of monoids over a monoidal category $\BC$, are well know, since the forgetful functor $\Mon\BC\xra{ |-|}\BC$ creates limits (see e.g. \cite{HEP_QM}), 
 not much seems to be known about the existence of colimits in $\Mon\BC$. Assuming that $\BC$ is cocomplete and $\ot$ preserves colimits, it is shown in \cite{SS} that  certain pushouts exist in $\Mon\BC$, while \cite{HEP_QM} deals with the more restrictive situation
 that $\BC$ is locally finitely presentable and $\ot$ preserves directed colimits: Then $\Mon\BC$ is locally presentable and, hence, cocomplete.

In this note we will consider the case where $\BC$ is cocomplete and $\Mon\BC\xra{ |-|}\BC$ is monadic. Then, by a well known result on monadic functors (see e.g. \cite[4.3.4]{Bor}), the category $\Mon\BC$ will be cocomplete provided that it has coequalizers. 

An application of this criterion becomes possible by a result on the existence of coequalizers in $\Mon\BC$ in  the preprint \cite{AEM}. Unfortunately, neither the publication \cite{AEM2} with a similar title nor the actual arXiv-version contains
this result such that it is nearly  impossible to become aware of it.
Since the author of this note is convinced that this result on the existence of coequalizers in $\Mon\BC$ should become part of the ``official'' literature and the authors of that preprint have no intention to publish this result \cite{LK}, its proof is included in Section \ref{sec:coeq} below. We add a short remark concerning what might have been the motivation for the construction described in that paper.
A similar comment is in place with respect to  the interesting application concerning left adjoints of functors between categories of monoids induced by a monoidal functor between the base categories,  presented  as Theorem  \ref{thm:T} below. Though  this theorem generalizes considerably  a similar one in \cite{AEM}, our proof uses  crucial elements of that proof.  We make clear, however, that what appears as an ad hoc construction  in \cite{AEM}
 in fact is  based on general principles for the lifting of  adjunctions.

Finally a word concerning terminology: by the phrase \em $\ot$ preserves colimits (of some type) \em  in the monoidal category $\BC$  we mean that, for each $C$ in $\BC$, the functors $C\ot -$ and $-\ot C$ preserve these colimits.


\section{Coequalizers in $\Mon\BC$}\label{sec:coeq}
Somewhat surprisingly, coequalizers in $\Mon\BC$ can be constructed the same way as in the special case of the category of unital rings, that is, as in $\Ring =\Mon\Ab$ where $\Ab$ denotes the monoidal category of abelian groups. We therefore recall this simple construction as follows: If 
\begin{equation*} %
\begin{aligned}
\xymatrix@=2.5em{
R\ar@<.5ex>[r]^{\alpha}\ar@<-.5ex>[r]_{\beta}\  & S\ar[r]^\pi&Q 
}
\end{aligned}
\end{equation*}
is a coequalizer diagram in $\Ring$, then $ = S/I_{\alpha,\beta}$, where $I_{\alpha,\beta}$ is the two-sided ideal generated by the image $\im(\alpha-\beta)$ in $\Ab$.
In other words, $I_{\alpha,\beta} = S\cdot \im(\alpha-\beta)\cdot S$.  

Denoting by  $m_S$ the multiplication of $S$ and, for any $\Ab$-morphism $X\xra{ \gamma}S$, by $\Lambda_\gamma$  the $\Ab$-morphism
\begin{equation*}
S\otimes X\otimes S\xra{ S\ot \gamma\ot S}S\ot S\ot S\xra{m_S\ot S}S\ot S\xra{ m_S}S
\end{equation*}
one obtains $I_{\alpha,\beta}=\im{\Lambda_{\alpha -\beta}} = \im(\Lambda_{\alpha}-\Lambda_{\beta})$ such that 
%
%
\begin{equation*} %
\begin{aligned}
\xymatrix@=2.5em{
S\ot R\ot S\ar@<.5ex>[r]^{\ \ \ \ \ \ \ \Lambda_\alpha}\ar@<-.5ex>[r]_{\ \ \ \ \ \ \ \Lambda_\beta}\  & S\ar[r]^\pi&Q 
}
\end{aligned}
\end{equation*}
is a coequalizer diagram in $\Ab$.


Following \cite{AEM} we will show that this essentially can  be generalized to an arbitrary monoidal category $\BC$. 
 Starting with a $\BC$-monoid 
 $(A,m_A,e_A)$  and   a $\BC$-morphism $X\xra{ \alpha} A$ we denote, 
using the same notation as above, by $\Lambda_\alpha$  the $\BC$-morphism
\begin{equation*}
A\otimes X\otimes A\xra{ A\ot \alpha\ot A}A\ot A\ot A\xra{m_A\ot A}A\ot A\xra{ m_A}A
\end{equation*}

\begin{Facts}\label{fact:1}\rm
The following facts are easy to verify:
\begin{enumerate}
\item For every monoid morphism $A\xra{ \tau} C$ the following hold (up to identification of $X$ and $I\ot X\ot I$):
\begin{eqnarray*}
\alpha = \Lambda_\alpha\circ (e_A\ot X\ot e_A) \text{ \ \ and \ \ } \tau\circ\Lambda_\alpha = \Lambda_{\tau\circ\alpha}\circ (\tau\ot X\ot \tau) 
\end{eqnarray*}

The following is then are immediate consequences. 
\begin{enumerate}
\item If $X\xra{ \beta} A$ is another $\BC$-morphism then 
\begin{equation} \label{eq:5}
\tau\circ\Lambda_\alpha  =\tau\circ\Lambda_\beta \iff \tau\circ\alpha  =\tau\circ\beta
\end{equation}
where the implication $\Rightarrow$ even holds, if $\tau$ only is a $\BC$-morphism. 
\item For every monoid morphism $A\xra{ \gamma}B$ one has 
\begin{equation*}\label{eqn::c}
\gamma\circ\Lambda_{\alpha} = \Lambda_{\gamma\circ\alpha}\circ (\gamma\ot X\ot \gamma)
\end{equation*}
\end{enumerate}
\end{enumerate}
\end{Facts}

\begin{lemma}\label{prop1}
Let $\BC$ be a monoidal category with coequalizers preserved by $\ot$ and let $A$ be a monoid in $\BC$.
If   $\alpha, \beta\colon X\xra{}A$ are   $\BC$-morphisms and
\begin{equation*} 
\begin{aligned}
\xymatrix@=2.5em{
A\ot X\ot A\ar@<.5ex>[r]^{\ \ \ \ \ \ \ \Lambda_\alpha}\ar@<-.5ex>[r]_{\ \ \ \ \ \ \ \Lambda_\beta}\  & A\ar[r]^\pi&Q 
}
\end{aligned}
\end{equation*} is a coequalizer diagram in $\BC$, then 
 $Q$ carries a  unique monoid structure such that $\pi$ is a monoid morphism.
 \end{lemma}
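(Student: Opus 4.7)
My plan is to construct the multiplication $m_Q$ and unit $e_Q$ on $Q$ by a two-step coequalizer factorization, using the hypothesis that $\ot$ preserves coequalizers on both sides. The unit is immediate: set $e_Q := \pi\circ e_A$. The work lies in constructing $m_Q$ and then verifying the monoid axioms and uniqueness.

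For the multiplication, I would first note that since $-\ot A$ preserves coequalizers, $\pi\ot A\colon A\ot A\to Q\ot A$ is the coequalizer of $\Lambda_\alpha\ot A, \Lambda_\beta\ot A$; and since $Q\ot -$ preserves coequalizers, $Q\ot \pi\colon Q\ot A\to Q\ot Q$ is the coequalizer of $Q\ot\Lambda_\alpha, Q\ot\Lambda_\beta$. The crucial computation, which is a routine consequence of the associativity of $m_A$, is the identity
\begin{equation*}
m_A\circ(\Lambda_\alpha\ot A) = \Lambda_\alpha\circ(A\ot X\ot m_A),
\end{equation*}
and the symmetric one on the other side. Combined with $\pi\circ\Lambda_\alpha = \pi\circ\Lambda_\beta$, these identities yield that $\pi\circ m_A$ coequalizes $\Lambda_\alpha\ot A,\Lambda_\beta\ot A$, so by the universal property there is a unique $\mu_1\colon Q\ot A\to Q$ with $\mu_1\circ(\pi\ot A)=\pi\circ m_A$.

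Next I would show that $\mu_1$ coequalizes $Q\ot\Lambda_\alpha, Q\ot\Lambda_\beta$. Precomposing with the regular epimorphism $\pi\ot A\ot X\ot A$, this reduces by naturality of $\ot$ to showing $\pi\circ m_A\circ(A\ot\Lambda_\alpha)=\pi\circ m_A\circ(A\ot\Lambda_\beta)$, which again follows from the symmetric associativity identity $m_A\circ(A\ot\Lambda_\alpha)=\Lambda_\alpha\circ(m_A\ot X\ot A)$ together with $\pi\circ\Lambda_\alpha = \pi\circ\Lambda_\beta$. Cancelling the epimorphism and invoking the coequalizer property of $Q\ot\pi$ produces a unique $m_Q\colon Q\ot Q\to Q$ with $m_Q\circ(Q\ot\pi)=\mu_1$, and therefore
\begin{equation*}
m_Q\circ(\pi\ot\pi) = \pi\circ m_A.
\end{equation*}

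It remains to verify the monoid axioms and uniqueness. Both unitality and associativity of $m_Q$ will follow from the corresponding laws for $m_A$ by diagram chases in which the relevant equalities are first established after precomposition with the epimorphism $\pi$ (for unitality) or $\pi\ot\pi\ot\pi$ (for associativity), the latter being epic as a composite of the preserved regular epimorphisms $\pi\ot A\ot A$, $Q\ot\pi\ot A$, and $Q\ot Q\ot\pi$. Uniqueness of the monoid structure is then immediate: any multiplication $m'_Q$ (resp.\ unit $e'_Q$) making $\pi$ a monoid morphism must satisfy $m'_Q\circ(\pi\ot\pi)=\pi\circ m_A=m_Q\circ(\pi\ot\pi)$ and $e'_Q = \pi\circ e_A = e_Q$, and epicness of $\pi\ot\pi$ forces $m'_Q=m_Q$.

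The main technical obstacle I anticipate is bookkeeping in step two, namely showing that $\mu_1$ descends along $Q\ot\pi$: the argument requires combining naturality of $\ot$, the two associativity identities for $\Lambda$, and a cancellation against a regular epimorphism. Once this is in place, the rest is standard manipulation with universal properties of coequalizers and the fact that tensor products of regular epimorphisms remain epimorphisms under our hypothesis.
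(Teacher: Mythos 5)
Your proposal is correct and follows essentially the same route as the paper: the same two associativity identities for $\Lambda_\alpha$, a two-step descent through the coequalizers obtained by tensoring $\pi$ (you use $\pi\ot A$ then $Q\ot\pi$, the paper uses $A\ot\pi$ then $\pi\ot Q$ --- a mirror-image of the same construction), and verification of the monoid axioms and uniqueness by cancelling the epimorphisms $\ot^n\pi$. No gaps.
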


\begin{proof}
Observe first that the following diagrams commute.
\begin{equation}\label{diag:basic}
\begin{aligned}
\xymatrix@=2.5em{
 A\ot A\ot X\ot A    \ar@<.5ex>[r]^{\ \ \ \ \ \ \ A\ot\Lambda_\alpha}\ar@<-.5ex>[r]_{\ \ \ \ \ \ \ A\ot\Lambda_\beta}
 \ar[d]_{ m_A\ot X\ot A }
 &{A\ot A } \ar[d]^{m_A }  \\
A\ot X\ot A \ar@<.5ex>[r]^{\ \ \ \ \ \ \ \Lambda_\alpha}\ar@<-.5ex>[r]_{\ \ \ \ \ \ \ \Lambda_\beta}         &   A
}\hspace{1cm}
\xymatrix@=2.5em{
  A\ot X\ot A  \ot A \ar@<.5ex>[r]^{\ \ \ \ \ \ \ \Lambda_\alpha\ot A}\ar@<-.5ex>[r]_{\ \ \ \ \ \ \ \Lambda_\beta\ot A}\ot A \ar[d]_{A\ot X\ot  m_A }
  &{A\ot A } \ar[d]^{m_A }  \\
A\ot X\ot A \ar@<.5ex>[r]^{\ \ \ \ \ \ \ \Lambda_\alpha}
\ar@<-.5ex>[r]_{\ \ \ \ \ \ \ \Lambda_\beta}  &   A
} 
\end{aligned}
\end{equation}
Since by assumption $A\ot A\xra{ A\ot \pi} A\ot Q$ is a coequalizer of  $A\ot \Lambda_\alpha$ and $ A\ot \Lambda_\beta$, one obtains a unique $\BC$-morphism $A\ot Q\xra{m }Q$ satisfying
\begin{equation}\label{3}
m\circ (A\ot \pi) = \pi\circ m_A
\end{equation}
By commutativity of Diagrams \eqref{diag:basic} and Equation \eqref{3}  the following  diagram  commutates.
\begin{equation*}
\begin{aligned}
\xymatrix@=3em{
 A\ot X\ot A\ot A    \ar@<.5ex>[r]^{\ \ \ \ \ \ \ \Lambda_\alpha\ot A}\ar@<-.5ex>[r]_{\ \ \ \ \ \ \ \Lambda_\beta\ot A}
 \ar[d]_{A\ot X\ot A\ot\pi  }&{ A\ot A}\ar[r]^{m_A} \ar[d]^{A\ot \pi } & A\ar[d]^{\pi } \\
  A\ot X\ot A\ot Q   \ar@<.5ex>[r]^{\ \ \ \ \ \ \ \Lambda_\alpha\ot Q}\ar@<-.5ex>[r]_{\ \ \ \ \ \ \ \Lambda_\beta\ot Q}        &   A\ot Q\ar[r]_m&Q
}
\end{aligned}
\end{equation*}
Since $A\ot X\ot A\ot\pi $ is a (regular) epimorphism and $A\ot Q\xra{ \pi\ot Q} Q\ot Q$ is the coequalizer of $\Lambda_\alpha\ot Q$ and   $\Lambda_\beta\ot Q$  by 
assumption, 
there exists a unique $\BC$-morphism $Q\ot Q\xra{ m_Q}Q$ satisfying $m_Q\circ (\pi\ot Q) =m$.

Using the facts that $A$ is a monoid and $\pi$ is a regular epimorphism (hence $\ot^n\pi$ an epimorphism for each $n$) one shows easily that $(Q,m_Q, \pi\circ e_A)$ is a monoid and and $\pi$ a monoid morphism.
\end{proof}

\begin{corollary}\label{corr:}
If   $\alpha, \beta\colon X\xra{}A$  and $\alpha', \beta'\colon X'\xra{ }A$ are $\BC$-morphisms such that, for every monoid morphism $A\xra{ \tau}C$, 
\begin{equation}\label{eq:2}
\tau\circ\alpha  =\tau\circ\beta \iff \tau\circ\alpha'  =\tau\circ\beta'.
\end{equation}
Then the coequalizers of $(\Lambda_\alpha, \Lambda_\beta)$ and $(\Lambda_{\alpha'}, \Lambda_{\beta'})$ coincide.
\end{corollary}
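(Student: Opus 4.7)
The plan is to apply Lemma \ref{prop1} to promote the two coequalizers to monoid quotients, and then to shuttle the coequalizing condition from the $\Lambda$-level down to the $\alpha$-level via Fact \ref{fact:1}, use hypothesis \eqref{eq:2}, and lift back up again. Since $\BC$ has coequalizers (preserved by $\ot$), let $\pi\colon A\to Q$ be the coequalizer of $(\Lambda_\alpha,\Lambda_\beta)$ and $\pi'\colon A\to Q'$ the coequalizer of $(\Lambda_{\alpha'},\Lambda_{\beta'})$. Lemma \ref{prop1} endows both $Q$ and $Q'$ with monoid structures making $\pi$ and $\pi'$ monoid morphisms.

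Now I would apply Fact \ref{fact:1} twice in sequence. From $\pi\circ\Lambda_\alpha=\pi\circ\Lambda_\beta$, the $\Rightarrow$-direction of \eqref{eq:5} (which does not require $\pi$ to be a monoid morphism) gives $\pi\circ\alpha=\pi\circ\beta$. Since $\pi$ is a monoid morphism, hypothesis \eqref{eq:2} of the corollary yields $\pi\circ\alpha'=\pi\circ\beta'$. Now applying the $\Leftarrow$-direction of \eqref{eq:5} (which does require $\pi$ to be a monoid morphism, and this is precisely what Lemma \ref{prop1} supplied) we obtain $\pi\circ\Lambda_{\alpha'}=\pi\circ\Lambda_{\beta'}$. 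Hence $\pi$ factors uniquely as $\pi=u\circ\pi'$ for some $\BC$-morphism $u\colon Q'\to Q$.

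Since the biconditional in the hypothesis is symmetric in the pairs $(\alpha,\beta)$ and $(\alpha',\beta')$, the same argument with roles exchanged (now using that $\pi'$ is a monoid morphism by Lemma \ref{prop1} applied to $(\Lambda_{\alpha'},\Lambda_{\beta'})$) produces $v\colon Q\to Q'$ with $\pi'=v\circ\pi$. The uniqueness clauses of the two coequalizer universal properties then force $u\circ v=\id_Q$ and $v\circ u=\id_{Q'}$, so $(\pi,Q)$ and $(\pi',Q')$ are isomorphic as coequalizers, which is the asserted coincidence. The only subtle point — and the place where the hypothesis that $\pi$ is a \emph{monoid} morphism is genuinely needed — is the second application of Fact \ref{fact:1}, since the converse direction of \eqref{eq:5} fails for arbitrary $\BC$-morphisms.
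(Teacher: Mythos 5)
Your argument is correct and follows essentially the same route as the paper: both use Lemma \ref{prop1} to make $\pi$ and $\pi'$ monoid morphisms and then chain the equivalence \eqref{eq:5} with hypothesis \eqref{eq:2} to show each projection coequalizes the other pair, whence the two coequalizers coincide. You merely spell out the final mutual-factorization step ($u$, $v$ inverse to each other) that the paper leaves implicit, and you correctly note that only the $\Leftarrow$ direction of \eqref{eq:5} needs the monoid-morphism property.
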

\begin{proof}
Let $A\xra{ \pi}E$ and $A\xra{ \pi'}E'$  be  coequalizers of  $(\Lambda_\alpha, \Lambda_\beta)$ and  $(\Lambda_{\alpha'}, \Lambda_{\beta'})$, respectively. Since $\pi$ and $\pi'$ are monoid morphisms by Lemma \ref{prop1}, one obtains by the equivalences \eqref{eq:5} and \eqref{eq:2} the  equivalences   
$$\pi\circ\Lambda_\alpha = \pi\circ\Lambda_\beta \iff \pi\circ \alpha = \pi\circ\beta \iff \pi\circ \alpha' = \pi\circ\beta' \iff \pi\circ\Lambda_{\alpha'} = \pi\circ\Lambda_{\beta'},$$
which implies the claim.
\end{proof}

\begin{remark}\label{rem:bar}\rm
Equivalence \eqref{eq:2} holds in particular, if $\alpha'$ and $\beta'$ are the homomorphic extensions $TX\rightarrow A$ of $\alpha$ and $\beta$.
\end{remark}

\begin{theorem}[\cite{AEM}]\label{thm:1}
 Let $\BC $ be a monoidal category with coequalizers preserved by $\otimes$. Then the category $\Mon\BC$ has coequalizers and the forgetful functor $\Mon\BC\xra{ }\BC$ preserves regular epimorphisms.
  \end{theorem}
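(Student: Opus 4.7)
The plan is to apply Lemma \ref{prop1} directly. Given a parallel pair $f, g : (A, m_A, e_A) \to (B, m_B, e_B)$ in $\Mon\BC$, I form the $\BC$-coequalizer $\pi : B \to Q$ of $\Lambda_f, \Lambda_g : B \ot A \ot B \to B$, where $\Lambda_f$ and $\Lambda_g$ are built from the monoid structure of $B$ and the underlying $\BC$-morphisms $f, g : A \to B$. By Lemma \ref{prop1}, $Q$ then carries a unique monoid structure $(m_Q, \pi \circ e_B)$ making $\pi$ a monoid morphism, and this $\pi$ is the proposed coequalizer of $f, g$ in $\Mon\BC$.

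That $\pi$ coequalizes $f, g$ as monoid morphisms follows at once from the $\iff$ in \eqref{eq:5}: $\pi \circ \Lambda_f = \pi \circ \Lambda_g$ by construction, whence $\pi \circ f = \pi \circ g$. For the universal property, let $\tau : B \to C$ be a monoid morphism with $\tau \circ f = \tau \circ g$. The $\Rightarrow$ direction of \eqref{eq:5} (which is valid already for $\BC$-morphisms $\tau$) yields $\tau \circ \Lambda_f = \tau \circ \Lambda_g$, so there is a unique $\BC$-morphism $\bar\tau : Q \to C$ with $\bar\tau \circ \pi = \tau$. The one real checkpoint is that $\bar\tau$ is a monoid morphism.

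Unit preservation is automatic, since $\bar\tau \circ (\pi \circ e_B) = \tau \circ e_B = e_C$. For multiplicativity I invoke the identity $m_Q \circ (\pi \ot \pi) = \pi \circ m_B$ that is established inside the proof of Lemma \ref{prop1}; postcomposing with $\bar\tau$ gives
\[
\bar\tau \circ m_Q \circ (\pi \ot \pi) = \tau \circ m_B = m_C \circ (\tau \ot \tau) = m_C \circ (\bar\tau \ot \bar\tau) \circ (\pi \ot \pi).
\]
Since $\ot$ preserves coequalizers on both sides, $\pi \ot \pi$ is a (regular) epimorphism, so $\bar\tau \circ m_Q = m_C \circ (\bar\tau \ot \bar\tau)$, as required. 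The second assertion of the theorem is then immediate: every regular epimorphism in $\Mon\BC$ is by definition a coequalizer of some parallel pair of monoid morphisms, and the construction above exhibits its underlying arrow as the $\BC$-coequalizer $\pi$, hence as a regular epimorphism in $\BC$. The only technical point is the epi-descent step for $\bar\tau$, which poses no difficulty since it is of the very same type used inside Lemma \ref{prop1}.
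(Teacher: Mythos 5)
Your proposal is correct and follows essentially the same route as the paper: form the $\BC$-coequalizer of $\Lambda_f,\Lambda_g$, invoke Lemma \ref{prop1} for the monoid structure on $Q$, use the equivalence \eqref{eq:5} for the universal property, and conclude that the induced map is multiplicative because $\pi\ot\pi$ is an epimorphism. The preservation of regular epimorphisms is handled as in the paper, via the fact that the underlying arrow of the constructed coequalizer is a regular epimorphism in $\BC$ (uniqueness of coequalizers up to isomorphism covers arbitrary regular epimorphisms in $\Mon\BC$).
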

In more detail: 
If   $\alpha, \beta\colon X\xra{}A$ are   monoid morphisms and
\begin{equation*} 
\begin{aligned}
\xymatrix@=2.5em{
A\ot X\ot A\ar@<.5ex>[r]^{\ \ \ \ \ \ \ \Lambda_\alpha}\ar@<-.5ex>[r]_{\ \ \ \ \ \ \ \Lambda_\beta}\  & A\ar[r]^\pi&Q 
}
\end{aligned}
\end{equation*} is a coequalizer diagram in $\BC$ then, with  the monoid structure   of Lemma \ref{prop1} on $Q$,   then 
\begin{equation*} 
\begin{aligned}
\xymatrix@=2.5em{
X\ar@<.5ex>[r]^{\alpha}\ar@<-.5ex>[r]_{\beta}\  & A\ar[r]^\pi&Q 
}
\end{aligned}
\end{equation*}
 is a coequalizer diagram in $\Mon\BC$.

\begin{proof}
Let $\alpha,\beta\colon D\xra{ }A$ be monoid morphisms and $A\xra{ \pi} Q$  the coequalizer of $\Lambda_\alpha$ and $\Lambda_\beta$ in $\BC$. By Lemma \ref{prop1}  $Q$ carries a unique monoid structure such that $\pi$ is a monoid morphism. If $A\xra{ \tau}C$ is a monoid morphism such that $\tau\circ\alpha =\tau\circ\beta$, then $\tau\circ\Lambda_\alpha  =\tau\circ\Lambda_\beta$ by the equivalence \eqref{eq:5}, such that there exists a unique  $\BC$-morphism $Q\xra{\sigma}C$ with $\sigma\circ\pi = \tau$. It remains to prove that $\sigma$ is a monoid morphism. But this is clear since $\pi$ is a regular epimorphism and, hence, $\pi\ot\pi$ is an epimorphism.
\end{proof}

\begin{remark}\label{rem:5}\rm
Given two parallel pairs of morphisms in a category with coequalizers
\begin{equation*} 
\begin{aligned}
\xymatrix@=2.5em{
X_1\ar@<.5ex>[r]^{f_1}\ar@<-.5ex>[r]_{g_1}& A 
}\hspace{1cm}
\xymatrix@=2.5em{
X_2\ar@<.5ex>[r]^{f_2}\ar@<-.5ex>[r]_{g_2} & A
}
\end{aligned}
\end{equation*}
one obtains a multiple coequalizer of the morphisms $f_1,g_1, f_2, g_2$ as follows: Form  the coequalizer $A\xra{ q_1}Q_1$ of $f_1,g_1$ and then the coequalizer  $Q_1\xra{ q_2} Q$ of $q_1\circ f_2, q_1\circ g_2$; then $A\xra{ q_1}Q_1\xra{q_2 }Q$ is the required multiple coequalizer. 

In particular, every category with coequalizers has such multiple coequalizers and any such is a composite of ordinary ones. 
\end{remark}


\section{Applications} 

\subsection*{Monadicity}
Applying the result above we first provide two similar monadicity criteria for the forgetful functor $|-|\colon \Mon\BC\xra{ }\BC$, provided that this has a left adjoint.

\begin{proposition}\label{thm:mon1}
Let $\BC $ be a monoidal category with regular factorizations and denumerable coproducts and assume that these as well as  regular epimorphisms are preserved by  $\otimes$. Then the forgetful functor $|-|\colon \Mon\BC\xra{ }\BC$ is regularly monadic.
\end{proposition}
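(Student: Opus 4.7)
My approach is to apply Beck's monadicity theorem in a regular-epi form, verifying: (a) $|-|$ has a left adjoint; (b) $|-|$ is conservative; and (c) $\Mon\BC$ admits the coequalizers required, with $|-|$ preserving them as regular epimorphisms in $\BC$.

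For (a), I would construct the free monoid $TX = \coprod_{n\ge 0} X^{\ot n}$, with unit the $n=0$ coproduct injection and multiplication assembled via the canonical isomorphisms $X^{\ot m}\ot X^{\ot n}\simeq X^{\ot(m+n)}$. This makes sense because $\BC$ has denumerable coproducts and $\ot$ preserves them on both sides, yielding $TX\ot TX \simeq \coprod_{m,n} X^{\ot m}\ot X^{\ot n}$ so that the multiplication is a well-defined $\BC$-morphism; the monoid axioms and the adjunction $T\dashv|-|$ follow in the standard way. For (b), a routine diagram chase shows that the $\BC$-inverse of a monoid isomorphism is itself a monoid morphism.

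For (c), the crux, I would adapt the construction of Lemma \ref{prop1}, with coequalizers in $\BC$ replaced by regular-epi images from the regular factorization system. Given a parallel pair $\alpha,\beta\colon X\rightrightarrows A$ in $\Mon\BC$, one produces a regular epimorphism $\pi\colon A\to Q$ in $\BC$ that coequalizes $\Lambda_\alpha$ and $\Lambda_\beta$ by factoring an appropriate $\BC$-morphism (built using denumerable coproducts to aggregate $\Lambda_\alpha$ and $\Lambda_\beta$) through its regular image. Preservation of regular epimorphisms by $\ot$ then makes $\pi\ot\pi$ and $\pi\ot\pi\ot\pi$ regular epimorphisms, and the diagonal-fill property of the regular factorization system yields a unique multiplication $m_Q\colon Q\ot Q\to Q$ satisfying $m_Q\circ(\pi\ot\pi) = \pi\circ m_A$ — exactly as in the proof of Lemma \ref{prop1}, but with regular-epi universality replacing coequalizer universality. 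The monoid axioms for $(Q,m_Q,\pi\circ e_A)$ and the fact that $\pi$ is a monoid morphism follow because $\pi^{\ot n}$ is epi for every $n$.

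The principal obstacle is the construction and universal property of $\pi$ under these weaker hypotheses: explicitly producing a ``regular coequalizer'' of $\Lambda_\alpha,\Lambda_\beta$ from only regular factorizations and denumerable coproducts, rather than directly invoking coequalizers in $\BC$. Once $\pi$ is in hand, $|-|$ preserves these coequalizers as regular epimorphisms by construction, and the upgrade from plain monadicity (immediate from Beck once (a) and (b) are established) to regular monadicity follows from this preservation together with conservativity.
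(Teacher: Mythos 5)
There is a genuine gap in your step (c), and it is exactly where your plan collides with the hypotheses. The $\Lambda$-construction of Lemma \ref{prop1} hinges on $A\ot\pi$ and $\pi\ot Q$ being \emph{coequalizers} of the tensored pairs, i.e., on $\ot$ preserving coequalizers; here you only know that tensoring preserves regular epimorphisms, and a regular epimorphism by itself carries no universal property against which to induce $m$ or $m_Q$. Your substitute --- produce $\pi$ as the regular image of a morphism ``aggregating'' $\Lambda_\alpha$ and $\Lambda_\beta$ via denumerable coproducts, then get $m_Q$ by diagonal fill-in --- does not work: without subtraction (as in $\Ab$) the coequalizer of a parallel pair is not the image of any single aggregated map, and a diagonal fill-in for $m_Q\circ(\pi\ot\pi)=\pi\circ m_A$ would require first showing that $\pi\circ m_A$ is compatible with a kernel pair or congruence of $\pi\ot\pi$, which is precisely the compatibility one cannot establish under these weaker hypotheses; this is why the paper proves the coequalizer construction only under the assumptions of Theorem \ref{thm:1}/Proposition \ref{thm:mon2} and explicitly contrasts the two sets of hypotheses in Remarks \ref{rem:}. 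In addition, your parenthetical claim that plain monadicity is ``immediate from Beck once (a) and (b) are established'' is false: a conservative right adjoint need not be monadic; the coequalizer condition is the substance of Beck's theorem, so it cannot be waved through.

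The paper's route avoids all of this. Monadicity is obtained from the Beck--Par\'e (crude) theorem by adapting Par\'e's argument for semigroups with $\times$ replaced by $\ot$: the only coequalizers one must handle there are split (absolute) ones, which every functor --- in particular $C\ot-$ and $-\ot C$ --- preserves, and the verification of the monoid axioms on the quotient needs only that tensoring sends regular epimorphisms to epimorphisms (the paper's footnote). The ``regular'' upgrade then follows from the standard criterion: $\BC$ has regular factorizations and the induced monad preserves regular epimorphisms, since $\T f=\coprod_n\ot^n f$ and both $\ot$ and denumerable coproducts preserve regular epis. This last point --- that the monad itself preserves regular epimorphisms --- is entirely absent from your proposal, and your closing assertion that regular monadicity follows from conservativity plus preservation of your constructed coequalizers is not a recognized criterion. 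Both the monadicity step and the regularity step therefore need to be redone along these lines.
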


\begin{proof}
By the standard construction of free monoids $|-|$ has a left adjoint. 
Monadicity follows by the Beck-Par\'e-Theorem 
(the argument given in \cite{Pare} for the case of semigroups applies by replacing $\times$ by $\ot$; it only requires that  $\ot$ preserves regular epimorphisms\footnote{In fact somewhat less is needed here, namely that for each $C$ in $\BC$ the functors $C\ot -$ and $-\ot C$ map regular epimorphisms to epimorphisms.}).

The respective monad $\T$ acts on a morphism $f$ by $\T f = \coprod_n \ot^n f$, hence, maps regular epimorphisms to regular epimorphisms by assumption.
\end{proof}

\begin{proposition}\label{thm:mon2}
Let $\BC $ be a  monoidal category with coequalizers and assume that these are preserved by  $\otimes$ and that $|-|\colon \Mon\BC\xra{ }\BC$ has a left adjoint.
Then the functor $|-|\colon \Mon\BC\xra{ }\BC$ is monadic (and preserves regular epimorphisms).
\end{proposition}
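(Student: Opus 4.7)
The plan is to invoke a crude monadicity theorem: a right adjoint that reflects isomorphisms and whose domain has, and which preserves, coequalizers of reflexive pairs is automatically monadic. The left adjoint to $|-|$ is given by hypothesis; that $|-|$ reflects isomorphisms is routine, as the inverse in $\BC$ of a monoid morphism with invertible underlying morphism is automatically a monoid morphism (pre- and post-compose the multiplication square with $f$ and $f\ot f$; similarly for the unit). Coequalizers in $\Mon\BC$, hence in particular reflexive ones, are furnished by Theorem~\ref{thm:1}.

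The substantive step, which I expect to be the main obstacle, is preservation of reflexive coequalizers by $|-|$. Let $\alpha,\beta\colon X\rightrightarrows A$ be a reflexive pair in $\Mon\BC$ with common section $s\colon A\to X$ (a monoid morphism satisfying $\alpha s=\beta s=\id_A$), and let $\pi\colon A\to Q$ be the $\BC$-coequalizer of $\Lambda_\alpha,\Lambda_\beta$, which by Theorem~\ref{thm:1} underlies the coequalizer of $\alpha,\beta$ in $\Mon\BC$. The claim is that $\pi$ is simultaneously the $\BC$-coequalizer of $\alpha$ and $\beta$. That $\pi\alpha=\pi\beta$ is the easy direction of \eqref{eq:5}. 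For the universal property, the key diagrammatic identity is
\begin{equation*}
\Lambda_\alpha \;=\; \alpha\circ m_X\circ(m_X\ot X)\circ(s\ot X\ot s),
\end{equation*}
which holds because $\alpha$ is a monoid morphism (so $\alpha\circ m_X\circ(m_X\ot X)=m_A\circ(m_A\ot A)\circ(\alpha\ot\alpha\ot\alpha)$) and $\alpha s=\id_A$ (so the two outer $\alpha s$-factors collapse). The analogous factorisation $\Lambda_\beta=\beta\circ m_X\circ(m_X\ot X)\circ(s\ot X\ot s)$ holds via $\beta s=\id_A$. Consequently any $\BC$-morphism $\tau\colon A\to C$ with $\tau\alpha=\tau\beta$ automatically satisfies $\tau\Lambda_\alpha=\tau\Lambda_\beta$, hence factors uniquely through $\pi$.

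With all three hypotheses of the crude monadicity theorem verified, $|-|$ is monadic. The parenthetical preservation of regular epimorphisms is an immediate reiteration of Theorem~\ref{thm:1}. The one point that requires any care is the identity displayed above; everything else is either assumed, cited, or an entirely formal manipulation with monoid structure.
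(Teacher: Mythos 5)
Your argument is correct, but it is genuinely different from the paper's. The paper proves monadicity exactly ``as above'', i.e.\ as in Proposition \ref{thm:mon1}: it invokes the Beck--Par\'e theorem, adapting Par\'e's absolute-coequalizer argument for semigroups with $\times$ replaced by $\ot$, which (as the footnote there notes) needs only that $C\ot-$ and $-\ot C$ send regular epimorphisms to epimorphisms; Theorem \ref{thm:1} is cited only for the parenthetical preservation of regular epimorphisms. You instead verify the crude monadicity theorem, and the one substantive point --- that $|-|$ preserves coequalizers of reflexive pairs --- you settle with the factorization $\Lambda_\alpha=\alpha\circ\mu$ and $\Lambda_\beta=\beta\circ\mu$, where $\mu=m_X\circ(m_X\ot X)\circ(s\ot X\ot s)$; this identity is valid (it uses only that $\alpha,\beta$ are monoid morphisms with $\alpha s=\beta s=\id_A$, and $s$ need not even be a monoid morphism), so any $\BC$-morphism $\tau$ merging $\alpha,\beta$ also merges $\Lambda_\alpha,\Lambda_\beta$ and hence factors uniquely through the $\BC$-coequalizer $\pi$ of the latter, which by Theorem \ref{thm:1} underlies the $\Mon\BC$-coequalizer; together with the easy direction of \eqref{eq:5} this shows $\pi$ is the $\BC$-coequalizer of $\alpha,\beta$. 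What each route buys: yours is self-contained relative to the paper --- it does not require importing and adapting Par\'e's external argument --- and it delivers the extra information that $|-|$ preserves reflexive coequalizers, which the paper never states; on the other hand it uses Theorem \ref{thm:1} (hence preservation of coequalizers by $\ot$) already for monadicity, whereas the paper's Beck--Par\'e argument needs only that tensoring maps regular epimorphisms to epimorphisms and keeps the two assertions of the proposition logically independent. Both proofs are sound.
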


\begin{proof}
Monadicity  follows as above.
 $|-|$ preserves regular epimorphisms    by  Theorem \ref{thm:1}. 
\end{proof}

\begin{remarks}\label{rem:}\rm
Note the differences between these  results:
\begin{enumerate}
\item  Proposition \ref{thm:mon1} requires the free monoid functor to be given in the standard way, while Proposition \ref{thm:mon2} works for an arbitrary  left adjoint of $|-|$ (see e.g. \cite{Lack} for important examples). 
\item Proposition \ref{thm:mon1} requires $\BC$ to have regular factorizations, while  Proposition \ref{thm:mon2} only requires the existence of coequalizers in $\BC$.
\item Proposition \ref{thm:mon2} requires $\ot$ to preserve  coequalizers, while  Proposition \ref{thm:mon1} only 
requires preservation of regular epimorphisms.
\end{enumerate}

Moreover, the assumption on coequalizers in Proposition \ref{thm:mon2} can be restricted to reflexive coequalizers if $\BC$ has binary coproducts which are preserved by $\ot$. In fact, such a category 
has coequalizers and a functor  $F$ on $\BC$ preserves those if and only if $\BC$ has reflexive coequalizers preserved by $F$: For any pair of morphisms $f,g\colon C\xra{ }D$ a morphism $D\xra{q}Q$ is a coequalizer of $f$ and $g$ if and only if $D\xra{q}Q$ is a coequalizer of the  reflexive pair $\bar{f},\bar{g}\colon C+D\xra{ }D$ where $C+D\xra{ \bar{f}}D$ is the morphism with components $f$ and $id_D$.
\end{remarks}

\subsection*{General Colimits in $\Mon\BC$}

Not much seems to be known about the existence of colimits in $\Mon\BC$. The only results we are aware of are
\begin{itemize}
\item If $\BC$ is cocomplete and $\ot$ preserves colimits, then the category 
$\Mon\BC$ has all pushouts of the form
\begin{equation*}
\begin{aligned}
\xymatrix@=2em{
   FC  \ar[r]^{Ff  }\ar[d]_{  }&{ FD} \ar[d]^{ }  \\
  X  \ar[r]_{ }          &    P 
}
\end{aligned}
\end{equation*}
where $C\xra{ f}D$ is a
 morphism in $\BC$
 and 
$F\colon\BC\xra{ }\Mon\BC$ is the free monoid functor (see \cite{SS}).
\item If $\BC$ is locally $\lambda$-presentable and $\ot$ preserves $\lambda$-directed colimits, then $\Mon\BC$ is locally presentable and, hence, cocomplete in particular (see \cite{HEP_QM}).
\end{itemize}

Using the monadicity criteria above we obtain the following results and so generalize  the result of \cite{SS} substantially.

\begin{proposition}\label{thm:col1} In the situation of Proposition \ref{thm:mon1}
$\Mon\BC$ has coequalizers and all other colimits which exist in $\BC$. In particular, if $\BC$ is cocomplete then so is $\Mon\BC$.
\end{proposition}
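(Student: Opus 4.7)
The strategy is to combine the monadicity of Proposition~\ref{thm:mon1} with the standard transfer theorem for colimits along monadic functors. By Proposition~\ref{thm:mon1}, $|-|\colon\Mon\BC\to\BC$ is regularly monadic, hence in particular monadic and possessed of a left adjoint. As already recalled in the Introduction, the classical result \cite[4.3.4]{Bor} asserts that whenever the algebra category $\mathcal{A}$ of a monadic functor $U\colon\mathcal{A}\to\mathcal{B}$ has coequalizers, $\mathcal{A}$ inherits every colimit existing in $\mathcal{B}$. This reduces the proposition to establishing the existence of coequalizers in $\Mon\BC$.

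For this I would adapt the construction of Theorem~\ref{thm:1}, where the coequalizer of a pair $\alpha,\beta\colon X\to A$ of monoid morphisms was produced from the coequalizer of $\Lambda_\alpha,\Lambda_\beta$ in $\BC$, equipped with the canonical monoid structure supplied by Lemma~\ref{prop1}. The decisive hypothesis there was that $\ot$ preserves coequalizers. Under the weaker hypothesis of Proposition~\ref{thm:mon1}, $\BC$ need not have this coequalizer directly at its disposal, but it has regular factorizations and denumerable coproducts (both preserved by $\ot$) and $\ot$ preserves regular epimorphisms. The idea is therefore to produce the quotient $\pi\colon A\to Q$ as the regular image of a suitable canonical morphism into $A$, assembled via the denumerable coproduct so as to capture all left and right multiplications by elements of $A$ applied to $\alpha$ and $\beta$, and then to re-prove the analogue of Lemma~\ref{prop1} using only that $\ot^n\pi$ is a regular epimorphism for each $n$.

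The principal obstacle is precisely this image-based replacement for Lemma~\ref{prop1}: one must check that the regular-image construction produces an actual coequalizer in $\Mon\BC$, rather than only a universal regular epimorphism endowed with a compatible monoid structure. Once coequalizers in $\Mon\BC$ are secured, the inherited colimits from $\BC$---and in particular cocompleteness when $\BC$ is cocomplete---follow immediately from the transfer theorem.
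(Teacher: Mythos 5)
There is a genuine gap, and it sits exactly where you place your ``principal obstacle''. Your first step is fine and matches the paper: by Proposition \ref{thm:mon1} the forgetful functor is (regularly) monadic, so by \cite[4.3.4]{Bor} everything reduces to producing coequalizers in $\Mon\BC$. But for that step you only offer a sketch --- form ``the regular image of a suitable canonical morphism into $A$, assembled via the denumerable coproduct'' and re-prove an analogue of Lemma \ref{prop1} --- and you explicitly leave open the verification that this yields an actual coequalizer in $\Mon\BC$. That verification is the whole content of the proposition, so the proof is not complete. Worse, the sketch itself leans on an additive intuition that is unavailable here: in the ring case the two-sided ideal is the image of a single morphism $\Lambda_{\alpha-\beta}$, but in a general monoidal category there is no ``difference'' $\alpha-\beta$; you are given a parallel pair $(\Lambda_\alpha,\Lambda_\beta)$, and taking regular images of single morphisms does not by itself encode the relation ``$\Lambda_\alpha$ equals $\Lambda_\beta$ after quotienting''. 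Turning an image-based construction into a coequalizer would require extra machinery (e.g.\ cointersections of regular quotients or an iteration), none of which is set up in your proposal.

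The point you are missing is that no new construction is needed at all. Under the hypotheses of Proposition \ref{thm:mon1}, $\BC$ has regular factorizations, and every category with regular factorizations already has coequalizers (\cite[20.33]{AHS}); this is how the paper argues. Combined with regular monadicity of $|-|\colon\Mon\BC\xra{}\BC$ --- regularly monadic functors detect colimits, equivalently they lift the regular factorization structure to the category of algebras --- one gets coequalizers in $\Mon\BC$, and then \cite[4.3.4]{Bor} (or the same detection result) gives all other colimits existing in $\BC$, in particular cocompleteness when $\BC$ is cocomplete. In other words, the weaker hypothesis ``$\ot$ preserves regular epimorphisms'' is never used to imitate Lemma \ref{prop1}; it was already consumed in establishing regular monadicity, and the rest is standard transfer theory.
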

\begin{proof}
Since every category with regular factorizations has coequalizers (see   \cite[20.33]{AHS}) the result follows immediately.
\end{proof}

\begin{proposition}\label{thm:col2}
In the situation of Proposition \ref{thm:mon2} the category $\Mon\BC$ has coequalizers. Moreover,  if $\BC$ is cocomplete then so is $\Mon\BC$.
\end{proposition}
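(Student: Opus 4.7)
The plan is to combine two results already established in the paper, and no new construction is required. For the first assertion (existence of coequalizers in $\Mon\BC$), I would invoke Theorem \ref{thm:1} directly: under the standing hypothesis of Proposition \ref{thm:mon2} that $\otimes$ preserves coequalizers in $\BC$, that theorem constructs the coequalizer of any parallel pair $\alpha,\beta\colon X\to A$ of monoid morphisms as the coequalizer of $\Lambda_\alpha,\Lambda_\beta$ in $\BC$, equipped with the monoid structure supplied by Lemma \ref{prop1}. So the first half of the proposition is immediate.

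For the second assertion, suppose $\BC$ is cocomplete. By Proposition \ref{thm:mon2} the forgetful functor $|-|\colon\Mon\BC\to\BC$ is monadic, and I would then apply the standard theorem on monadic functors already recalled in the Introduction (see \cite[4.3.4]{Bor}): if $U\colon\mathcal{A}\to\mathcal{B}$ is monadic with $\mathcal{B}$ cocomplete, then $\mathcal{A}$ is cocomplete as soon as it possesses coequalizers. Combining the cocompleteness of $\BC$ with the existence of coequalizers in $\Mon\BC$ just obtained, the theorem yields the cocompleteness of $\Mon\BC$ at once.

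There is essentially no obstacle to overcome; this is precisely why the monadicity criterion of Proposition \ref{thm:mon2} was set up in the first place. The nontrivial content—namely the explicit construction of coequalizers via the maps $\Lambda_\alpha,\Lambda_\beta$—lives entirely in Section \ref{sec:coeq}, and the passage from ``coequalizers plus monadicity over a cocomplete base'' to ``full cocompleteness'' is categorical folklore. If one wished, the appeal to \cite[4.3.4]{Bor} could be replaced by the multiple-coequalizer observation of Remark \ref{rem:bar} together with the creation of $|-|$-split colimits, but this adds nothing conceptually.
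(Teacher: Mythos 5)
Your proof is correct and follows the paper's own argument exactly: coequalizers in $\Mon\BC$ come from Theorem \ref{thm:1}, and cocompleteness then follows from the monadicity established in Proposition \ref{thm:mon2} together with \cite[4.3.4]{Bor}. (Only your closing aside is slightly off: the multiple-coequalizer remark is Remark \ref{rem:5}, not Remark \ref{rem:bar}, but this does not affect the proof.)
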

\begin{proof}
The first result follows from Theorem \ref{thm:1} and implies the second by \cite[4.3.4]{Bor}. 
\end{proof}

\subsection*{Lifting adjunctions}\label{sec:Tamb}

Let $R\colon \BC\rightarrow\BD$ be a monoidal functor. It is well known that $R$ induces a functor $\bar{R}\colon\Mon\BC\rightarrow\Mon\BD$ such that the diagram
\begin{equation*}
\begin{aligned}
\xymatrix@=2em{
  \Mon\BC   \ar[r]^{ \bar{R} }\ar[d]_{  }&\Mon\BD \ar[d]^{ }  \\
 \BC   \ar[r]_{ R}          &    \BD 
}
\end{aligned}
\end{equation*}
commutes, where the vertical arrow denote the respective forgetful functors (denoted by $|-|$ if necessary).
It is of quite some interest (see e.g. \cite{Ver}, \cite{PS}) to know under which conditions the functor  $\bar{R}$ has a left adjoint if $R$ has one.

A standard approach to this problem would be to apply Dubuc's Adjoint Triangle Theorem, which would require both forgetful functors to have left adjoints and $\Mon\BC$ to have coequalizers of reflexive pairs. Tambara \cite{Tam} claimed without a proof that it suffices to assume that $\BC$ is cocomplete and that $\ot$ preserves all colimits. A proof of this claim is contained in \cite{AEM}.
The following is a generalization of this result in that we do not assume the free monoids over $\BC$ to be given by MacLane's standard construction.
 
\begin{theorem}\label{thm:T}
Let $R\colon \BC\rightarrow\BD$ be a monoidal functor with left adjoint $L$. Assume that $\BC$ has coequalizers which are preserved by $\ot$ and that the forgetful functor $\Mon\BC\xra{ |-|}\BC$  has left adjoint $T$.
Then the functor $\bar{R}\colon\Mon\BC\rightarrow\Mon\BD$  has a left adjoint.
\end{theorem}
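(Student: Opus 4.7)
My plan is to construct a left adjoint $\bar L\colon\Mon\BD\to\Mon\BC$ to $\bar R$ explicitly, defining $\bar L D$ as a suitable quotient in $\Mon\BC$ of the free monoid $T(LD)$.

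Since $R$ is (lax) monoidal and $L\dashv R$, the functor $L$ inherits an opmonoidal structure with comparisons $L_2\colon L(D\ot D)\to LD\ot LD$ and $L_0\colon LI_\BD\to I_\BC$. Composing $L\dashv R$ with $T\dashv|-|$ produces a natural bijection
\[
\BD(D, RC)\ \cong\ \BC(LD, C)\ \cong\ \Mon\BC(T(LD), C),
\]
sending a $\BD$-morphism $f\colon D\to RC$ to the unique monoid morphism $\bar g\colon T(LD)\to C$ with $\bar g\circ\eta^T_{LD}=g$, where $g\colon LD\to C$ is the $L\dashv R$-transpose of $f$. The first step is to verify that $f$ underlies a monoid morphism $D\to\bar R C$ if and only if $\bar g$ coequalizes the two parallel pairs
\[
\tilde\alpha,\tilde\beta\colon L(D\ot D)\rightrightarrows T(LD),\qquad \tilde\gamma,\tilde\delta\colon LI_\BD\rightrightarrows T(LD),
\]
where $\tilde\alpha=\eta^T_{LD}\circ Lm_D$, $\tilde\beta=m_{T(LD)}\circ(\eta^T_{LD}\ot\eta^T_{LD})\circ L_2$, $\tilde\gamma=\eta^T_{LD}\circ Le_D$, and $\tilde\delta=e_{T(LD)}\circ L_0$. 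This is a direct computation: transport the multiplication- and unit-compatibility conditions on $f$ through both adjunctions, using that $L_2$ and $L_0$ are the $L\dashv R$-transposes of $R_2\circ(\eta_D\ot\eta_D)$ and $R_0$, and that $\bar g$ being a monoid morphism lets one absorb $m_C$ and $e_C$ into $\bar g$ on one side of each equation.

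Granting this, I define $\bar L D$ as the multiple coequalizer in $\Mon\BC$ of the two pairs above. By Remark \ref{rem:bar} and Corollary \ref{corr:}, this coincides with the $\Mon\BC$-coequalizer of their homomorphic extensions $TL(D\ot D)\rightrightarrows T(LD)$ and $TLI_\BD\rightrightarrows T(LD)$, which are honest monoid morphisms, and existence of this coequalizer is guaranteed by Theorem \ref{thm:1} combined with Remark \ref{rem:5}. The required natural bijection
\[
\Mon\BC(\bar L D, C)\ \cong\ \Mon\BD(D, \bar R C)
\]
is then immediate by construction: a monoid morphism out of $\bar L D$ is precisely a monoid morphism $T(LD)\to C$ coequalizing the four maps, which by the previous step corresponds exactly to a monoid morphism $D\to\bar R C$. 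Functoriality of $\bar L$ in $D$ and naturality of the bijection in both variables follow by routine verification.

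The main obstacle I anticipate is the notation-heavy bookkeeping needed to translate the two monoid-morphism axioms on $f\colon D\to\bar R C$ into the coequalizer conditions on $\bar g$; this hinges on the compatibility between the monoidal structure on $R$ and the induced opmonoidal structure on $L$, and on carefully tracking both adjunctions.
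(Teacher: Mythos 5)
Your construction coincides with the paper's: you take $\bar L D$ to be the multiple coequalizer in $\Mon\BC$ of the homomorphic extensions of the same four morphisms (your $\tilde\alpha,\tilde\beta,\tilde\gamma,\tilde\delta$ are exactly the paper's $\beta_2^\msd,\alpha_2^\msd,\beta_1^\msd,\alpha_1^\msd$), with existence supplied by Theorem \ref{thm:1} and Remark \ref{rem:5}, so this is essentially the same proof. The only difference is presentational: the paper verifies the adjunction by exhibiting the unit $\gamma_\msd$ and counit $\sigma_\msa$ along Tholen's adjoint-triangle analysis, whereas you fold both verifications into a single \emph{if-and-only-if} transposition lemma yielding the hom-set bijection directly, which is a correct and equivalent way to finish.
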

\begin{proof}
We use the following notations.
\begin{enumerate}
\item Unit and counit of the adjunction $T\dashv |-|$ are denoted by $ \xi$ and $\zeta$, respectively. For every $C$ in $BC$  $m_{TC}$ and $e_{TC}$ are the multiplication and unit, respectively, of the free monoid $TC$. 
\item $\Phi$ denotes the multiplication and $\phi$ the unit of the monoidal structure of $R$; $\Psi$ and $\psi$ denote the opmonoidal structure of $L$. $\kappa\colon id\Rightarrow RL$ and $\lambda\colon LR\Rightarrow id$
denote the unit and counit, respectively, of the adjunction $L\dashv R$.
\item Unit and counit of the adjunction $F_1:=TL\dashv R|-|=:P_1$ are denoted by $\eta $ and $\epsilon$, respectively. In particular, for any $\BD$-object~$D$,
\begin{equation}
 \eta_D = D\xra{\kappa_D}RLD\xra{ R\xi_{LD}}R|TLD|
\end{equation}
\end{enumerate}

Following the analysis of adjoint triangles in \cite{Thol} one should try to obtain the left adjoint of $\bar{R}$ as follows:
\begin{enumerate}
\item Find in $\Mon\BC$, for each $\BD$-monoid $\msd:=(D,m_D,e_D)$, a suitable morphism $$F_1 D = TLD\xra{ \pi_\msd} L_\msd.$$
\item Find  in $\Mon\BD$ a morphism $\msd\xra{\gamma_\msd }\bar{R}L_\msd  $  with $$|\gamma_\msd| = D\xra{ \eta_D}P_1F_1D\xra{R|\pi_\msd|}\bar{R}L_\msd.$$
\end{enumerate}
 $\gamma_\msd$ so defined  has the potential of being $\bar{R}$-universal for $\msd$, hence the family $\gamma =(\gamma_\msd)_\msd$ to be the unit of the desired adjunction.

 Now $\pi_\msd$ has to be an epimorphism since the forgetful functor $\Mon\BD\rightarrow\BD$ is faithful (see \cite{Thol}). A natural choice of $\pi_\msd$, thus, would be to consider a (multiple) coequalizer of $\Mon\BC$-morphisms which 
in some way reflect the monoid structure of $\msd = (D,m_\msd,e_\msd)$ and the monoidal structure of $R$ (equivalently, the opmonoidal structure of $L$). 
We therefore use the following morphisms (see also \cite{AEM}).
\begin{enumerate}
\item \label{c1}$\alpha_1^\msd = LI_\BD\xra{ \psi}I_\BC\xra{e_{TLD}}|TLD|$ \\ 
 $\beta_1^\msd = LI_\BD\xra{ Le_\msd}LD\xra{\xi_{LD}}|TLD|$
\item  \label{c2} $\alpha_2^\msd = L(D\ot D)\xra{ \Psi_{D,D}}LD\ot LD\xra{\xi_{LD\ot LD} }|TLD|\ot |TLD|\xra{ m_{TLD}}|TLD|$  \\  
$\beta_2^\msd = L(D\ot D)\xra{Lm_\msd}LD\xra{ \xi_{LD}}|TLD|$
\end{enumerate}
and consider the homomorphic extensions of these maps, that is, the monoid morphisms
\begin{enumerate}
\setcounter{enumi}{2}
\item \label{c5}$\bar{\alpha}_1^\msd, \bar{\beta}_1^\msd\colon TLI_\BD\xra{ } TLD$ with $\bar{\alpha}_1^\msd\circ \xi_{LI_\BD}=\alpha_1^\msd$ and $\bar{\beta}_1^\msd\circ \xi_{LI_D}=\beta_1^D$
\item \label{c6}$\bar{\alpha}_2^\msd, \bar{\beta}_2^\msd\colon TL(D\ot D)\xra{ } TLD$ with $\bar{\alpha}_2^\msd\circ \xi_{L(D\ot D)}=\alpha_2^\msd$ and $\bar{\beta}_2^\msd\circ \xi_{L(D\ot D)}=\beta_2^D$.
\end{enumerate}
Now let $ TLD\xra{ \pi_\msd}L_\msd$ be the multiple coequalizer (which exists by assumption --- see Remark \ref{rem:5}) 
of the morphisms $\bar{\alpha}_1^\msd$, $\bar{\beta}_1^\msd$, $\bar{\alpha}_2^\msd$, $\bar{\beta}_2^\msd$
 in $\Mon\BC$.
 
 According to step 2.  we check that the  following $\BD$-morphism is a morphism of $\BD$-monoids $\msd\xra{ }\bar{R}L_\msd$. 
\begin{equation*}
\msd\xra{ \gamma_\msd}\bar{R}L_\msd:=\msd\xra{\eta_D}\bar{R}TLD\xra{ R\pi_\msd}\bar{R}L_\msd
\end{equation*}
Compatibility with the multiplications is equivalent to commutativity of  the outer frame of the following diagram,   which is clear  by standard arguments for monoidal functors and the fact that $\pi_\mcd$ coequalizes $\bar{\alpha}_2^\msd$ and $\bar{\beta}_2^\msd$.

Preservation of  units follows by a similar argument.

\begin{equation*}
\begin{aligned}\small
\xymatrix@=3.5em{
   D\ot D\ar@/^2pc/@{->}[rr]_{\eta_D\ot \eta_D}\ar[rdd]_{\kappa_{D\ot D}} \ar[r]_{\!\!\!\!\!\!\!\!\!\!\kappa_D\ot \kappa_D} \ar[ddd]_{ m_D } & {RLD\ot RLD }  \ar[d]^{\Phi_{LD,LD }}\ar[r]_{\!\!\!\!R\xi_{LD}\ot R\xi_{LD}} & RTLD\ot RTLD \ar[d]^{\Phi_{TLD,TLD }} \ar[rr]_{R\pi_\msd\ot R\pi_\msd}& &RL_\msd\ot RL_\msd\ar[d]^{\Phi_{L_\msd,L_\msd} }\\
       & {R(LD\ot LD) } \ar[r]_{\!\!\!\!\!R(\xi_{LD}\ot \xi_{LD})} & R(TLD\ot TLD) \ar[dd]^{Rm_{TLD} }\ar[rr]_{R(\pi_\msd\ot \pi_\msd)} &&R(L_\msd\ot L_\msd)\ar[dd]^{Rm_{L_\msd} }\\
            & {RL(D\ot D) } \ar[d]^{ RLm_\msd}\ar[u]_{R\Psi_{D,D}} &                      &                        &        \\
 D\ar[r]^{\kappa_D}  \ar@/_2pc/@{->}[rr]^{\eta_D}                                 & {RLD}\ar[r]^{R\xi_{LD}}                          & RTLD\ar[rr]^{R\pi_\msd}         &                    &    RL_\msd 
}
\end{aligned}
\end{equation*}

It remains to prove that $\gamma_\msd$ is $\bar{R}$-universal for $\msd$. Here  we again use elements of the proof given in \cite{AEM}. 

First define, for a $\BC$-monoid $\msa$, a morphism $L_{\bar{R}\msa}\xra{\sigma_\msa }\msa$ in $\Mon\BC$. This will in fact be the  counit of the desired adjunction.
By the  easily checked  identities 
\begin{equation*}
\epsilon_A\circ \alpha_1^{\bar{R}\msa} = \epsilon_A\circ \beta_1^{\bar{R}\msa}\text{\ and \ }
\epsilon_A\circ \alpha_2^{\bar{R}\msa} = \epsilon_A\circ \beta_2^{\bar{R}\msa}.
\end{equation*}
one obtains, using the equivalence \eqref{eq:5} and the universal property of $\pi_{\bar{R}\msa}$, a unique monoid morphism 
$L_{\bar{R}\msa}\xra{ \sigma_\msa }\msa$  making the following diagram commute
\begin{equation}\label{diag:sigma}
\begin{aligned}
\xymatrix@=2.5em{
L_{\bar{R}\msa}     \ar[r]^{\sigma_\msa }&{\msa }   \\
 TLRA\ar[ur]_{\epsilon_A}\ar[u]^{\pi_{\bar{R}\msa}  } 
}
\end{aligned}
\end{equation}

Next one checks, for any    morphism $\msd\xra{ h}\msc$ in $\Mon\BD$, the identities
\begin{eqnarray*}
TLh \circ \alpha_1^\msd = \alpha_1^\msc  & & TLh \circ \beta_1^\msd = \beta_1^\msc\\
TLh \circ \alpha_2^\msd = \alpha_2^\msc\circ L(h\ot h)  &  & TLh \circ \beta_2^\msd = \beta_2^\msc\circ L(h\ot h)
\end{eqnarray*}
These  imply
\begin{eqnarray*}
(\pi_\msc\circ TLh) \circ \alpha_1^\msd& = &\pi_\msc\circ\alpha_1^\msc = \pi_\msc\circ\beta_1^\msc \\ & =& (\pi_\msc\circ TLh) \circ \beta_1^\msd \\
(\pi_\msc\circ TLh) \circ \alpha_2^\msd &=&\pi_\msc\circ\alpha_2^\msc \circ L(h\ot h) =\pi_\msc\circ\beta_2^\msc\circ L(h\ot h)  \\ &=&(\pi_\msc\circ TLh) \circ \beta_2^\msd  
\end{eqnarray*}
Hence, $\pi_\msc\circ TLh $ coequalizes simultaneously the $\BC$-morphisms $\alpha_1, \alpha_2,\beta_1, \beta_2$ and, thus, by Remark \ref{rem:bar} the monoid morphisms $\bar{\alpha}_1, \bar{\alpha}_2,\bar{\beta}_1, \bar{\beta}_2$. Consequently there exist a unique monoid morphism $L_\msd\xra{L_h }L_\msc$ making the following diagram commute
\begin{equation}\label{diag:Eh}
\begin{aligned}
\xymatrix@=2.5em{
  L_\msd  \ar[r]^{ L_h }&{ L_\msc}   \\
  TLD \ar[u]^{\pi_D  }  \ar[r]_{ TLh}          &  TLC  \ar[u]_{\pi_C }
}
\end{aligned}
\end{equation}

Then, for every  morphism $\msd\xra{ d}\bar{R}\msa$ in $\Mon\BD$, the following diagram obviously commutes (in $\BD$) and illustrates the required  one-to-one correspondence between morphisms  $\msd\xra{ d}\bar{R}\msa$ in $\Mon\BD$ and morphisms $L_\msd\xra{\sigma_\msa\circ L_d}\msa$ in $\Mon\BC$.
\begin{equation*}
\begin{aligned}
\xymatrix@=3em{
   D \ar@/^2.2pc/@{->}[rrr]^{\gamma_\msd} \ar@/^1pc/@{->}[rr]^{\eta_D  }\ar@/_3pc/@{->}[rrdd]_d\ar[r]_{\kappa_D}& RLD\ar[r]_{R\xi_{LD}}\ar@/_1pc/@{->}[ddr]_{RLd} & { RTLD} \ar[d]_{ RTLd}\ar[r]_{R\pi_\msd} & RL_\msd \ar[d]^{RL_d}\\
 &  &    RTLRA \ar[r]^{R\pi_{\bar{R}\msa} } \ar[d]_{R\epsilon_A}   &    RL_{\bar{R}\msa}\ar[dl]^{R\sigma_\msa}\\ 
 & & RA\\
}
\end{aligned}
\end{equation*}
 \end{proof}

\begin{remarks}\label{rems:}\rm
To apply this theorem one typically will use Theorem \ref{thm:1}. In this case, if assuming in addition existence of (countable) coproducts in $\BC$ preserved by  $\ot$, Theorem \ref{thm:T} specializes to  Tambara's claim. 
\end{remarks}

\end{document}